\documentclass[a4paper,12pt]{article}
\usepackage{comment}
\usepackage{cite}
\usepackage{amsmath,amssymb,amsfonts}
\usepackage[T1]{fontenc}
\usepackage[utf8]{inputenc}
\usepackage{graphicx}
\usepackage{fancyhdr}
\usepackage{float}
\usepackage{xcolor}
\usepackage{authblk}
\usepackage{mathrsfs}
\usepackage[hyphens]{url}
\usepackage{hyperref}
\usepackage{amsthm}
\usepackage{tikz}
\usetikzlibrary{decorations.markings}
\usepackage{enumitem}
\usepackage{geometry}

\theoremstyle{plain}
\newtheorem{theorem}{Theorem}[section]
\newtheorem{proposition}[theorem]{Proposition}

\theoremstyle{definition}

\theoremstyle{remark}
\newtheorem{remark}[theorem]{Remark}

\newtheorem{example}{Example}[section]

\begin{document}

\title{Cumulative Riemann sums, distribution functions, and a universal inequality}

\author[$\dagger$]{Jean-Christophe {\sc Pain}$^{1,2,}$\footnote{jean-christophe.pain@cea.fr}\\
\small
$^1$CEA, DAM, DIF, F-91297 Arpajon, France\\
$^2$Universit\'e Paris-Saclay, CEA, Laboratoire Mati\`ere en Conditions Extrêmes,\\ 
F-91680 Bruy\`eres-le-Châtel, France
}

\date{}

\maketitle

\begin{abstract}
We study discrete expressions of the form
\[
T_n(g)=\sum_{i=1}^n a_i g(S_i), 
\qquad S_i=\sum_{j=1}^i a_j,
\]
where $a_i>0$ and $\sum_{i=1}^n a_i=1$. If $g:[0,1]\to\mathbb{R}$ is a decreasing integrable function, we have
\[
\sum_{i=1}^n a_i g(S_i) \le \int_0^1 g(x)\,dx,
\]
from which classical inequalities can be obtained, for instance for the choice $g(x)=1-x^k$. Although elementary, this inequality admits a natural interpretation in terms of Riemann sums, Abel summation, and the probability integral transform. The aim of this paper is to present a unified perspective emphasizing that the discrete inequality is a consequence of a distribution-free continuous identity. Beyond the specific example, we establish a general discrete theorem for monotone functions and discuss connections with majorization theory and Karamata's inequality.

\end{abstract}

\section{Introduction}

Let $a_1,\dots,a_n$ be positive numbers satisfying
\[
\sum_{i=1}^n a_i =1,
\]
and define the cumulative sums
\[
S_i = \sum_{j=1}^i a_j.
\]
Then we have
\[
0=S_0 < S_1 < \dots < S_n=1.
\]
Expressions involving the pair $(a_i,S_i)$ arise naturally in many contexts involving cumulative processes. In this article we study sums of the form
\[
T_n(g)=\sum_{i=1}^n a_i g(S_i).
\]

In Section \ref{sec:disc}, we first introduce the main discrete monotone inequality and provide a Riemann-sum interpretation that naturally leads to a continuous bound. In Section~\ref{sec:examples}, we illustrate the theorem with the classical polynomial case. We then present alternative formulations using Abel summation (in Section~\ref{sec:abel}) and the probability integral transform (in Section \ref{sec:conti}), highlighting the connection with continuous integrals and distribution functions. The connection with majorization is outlined in Section~\ref{sec:maj}, followed by a discussion on the strictness of the inequality in Section~\ref{sec:strict}, clarifying the conditions under which equality can or cannot occur. Possible applications in probability and numerical quadrature are mentioned in the conclusion.  In the appendix, we provide additional examples of decreasing functions, including exponential, logarithmic, reciprocal, and trigonometric functions, showing the generality of the inequality.

\section{Main discrete theorem and Riemann-sum interpretation}\label{sec:disc}

\begin{theorem}[Discrete monotone inequality]

Let $a_1,\dots,a_n>0$ with $\sum_{i=1}^n a_i=1$, and define
\[
S_i = \sum_{j=1}^i a_j.
\]
Let $g:[0,1]\to\mathbb{R}$ be a decreasing integrable function. Then
\[
\sum_{i=1}^n a_i g(S_i) \le \int_0^1 g(x)\,dx.
\]

\end{theorem}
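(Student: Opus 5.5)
The plan is to read the left-hand side as a right-endpoint Riemann sum for $g$ over the partition $0=S_0<S_1<\dots<S_n=1$ and compare it with the integral interval by interval. The key observation is that $a_i = S_i - S_{i-1}$ is exactly the length of the $i$-th subinterval $[S_{i-1},S_i]$. Hence
\[
\sum_{i=1}^n a_i g(S_i)=\sum_{i=1}^n \int_{S_{i-1}}^{S_i} g(S_i)\,dx ,
\]
so that the statement reduces to showing
\[
\int_{S_{i-1}}^{S_i} g(S_i)\,dx \le \int_{S_{i-1}}^{S_i} g(x)\,dx
\]
for each $i$, and then summing over $i$.

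For the local inequality, I use that $g$ is decreasing. For every $x\in[S_{i-1},S_i]$ we have $x\le S_i$, hence $g(x)\ge g(S_i)$. Integrating this pointwise bound over $[S_{i-1},S_i]$ gives
\[
\int_{S_{i-1}}^{S_i} g(x)\,dx \ge (S_i-S_{i-1})\,g(S_i)=a_i g(S_i).
\]
Monotonicity of the integral applies since $g$ is integrable on $[0,1]$, hence on each subinterval. Summing over $i=1,\dots,n$ and using additivity of the integral over the partition, $\sum_i\int_{S_{i-1}}^{S_i} g=\int_0^1 g$, yields the claim.

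There is essentially no hard step. The only points needing care are that the subintervals are nondegenerate and cover $[0,1]$, which follows from $a_i>0$ and $S_n=1$, and that integrability of a monotone $g$ is automatic anyway. A decreasing $g$ on $[0,1]$ is bounded by $g(0)$ and $g(1)$, so it is Riemann integrable. No improper-integral issue arises, even at the endpoint $x=1$ where $g(S_n)=g(1)$ appears.

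I would also remark that "decreasing" need only mean nonincreasing for the argument. Equality holds if and only if $g$ is almost everywhere constant, equal to $g(S_i)$, on each subinterval $[S_{i-1},S_i]$. This points toward the later discussion of strictness.
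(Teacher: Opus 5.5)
Your proof is correct and is essentially the paper's own argument: write $a_i=S_i-S_{i-1}$, use $g(S_i)\le g(x)$ for $x\in[S_{i-1},S_i]$ to get $a_i g(S_i)\le\int_{S_{i-1}}^{S_i}g(x)\,dx$, and sum over $i$ using additivity of the integral. Your side remarks are also sound, in particular your equality characterization ($g$ a.e.\ equal to $g(S_i)$ on each $[S_{i-1},S_i]$). It is sharper than the paper's strictness discussion, since it shows equality fails for any nonconstant linear decreasing $g$, contrary to the paper's claim about uniform weights.
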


\begin{proof}

Using $a_i=S_i-S_{i-1}$ we obtain
\[
\sum_{i=1}^n a_i g(S_i)
=
\sum_{i=1}^n (S_i-S_{i-1})g(S_i).
\]
Since $g$ is decreasing, we have
\[
g(S_i) \le g(x), \qquad x\in[S_{i-1},S_i],
\]
and we get
\[
(S_i-S_{i-1})g(S_i)
\le
\int_{S_{i-1}}^{S_i} g(x)\,dx.
\]
Summing over $i$ yields
\[
\sum_{i=1}^n a_i g(S_i)
\le
\sum_{i=1}^n \int_{S_{i-1}}^{S_i} g(x)\,dx
=
\int_0^1 g(x)\,dx.
\]

\end{proof}

Since
\[
a_i = S_i-S_{i-1},
\]
we may rewrite
\[
T_n(g)=\sum_{i=1}^n (S_i-S_{i-1})g(S_i).
\]
Thus $T_n(g)$ is a right-endpoint Riemann sum associated with the partition
\[
0=S_0<S_1<\dots<S_n=1.
\]
If $g$ is decreasing, each rectangle lies below the graph of $g$ and
\[
T_n(g) \le \int_0^1 g(x)\,dx.
\]

\section{A simple polynomial case}\label{sec:examples}

A motivating example is obtained for
\[
g(x)=1-x^k.
\]
We first rewrite the sum using cumulative sums. Since
\[
a_i = S_i - S_{i-1},
\]
we can write
\[
\sum_{i=1}^n a_i (1 - S_i^k) = \sum_{i=1}^n (S_i - S_{i-1}) (1 - S_i^k).
\]
This is exactly a right-endpoint Riemann sum for the integral \(\int_0^1 (1 - x^k)\, dx\) over the partition \(0 = S_0 < S_1 < \dots < S_n = 1\). The second step consists in bounding each term using monotonicity. The function \(g(x) = 1 - x^k\) is decreasing on \([0,1]\). Therefore, for any \(x \in [S_{i-1}, S_i]\), we can write
\[
1 - S_i^k \le 1 - x^k,
\]
and thus
\[
(S_i - S_{i-1}) (1 - S_i^k) \le \int_{S_{i-1}}^{S_i} (1 - x^k)\, dx.
\]
Summing over \(i=1,\dots,n\), we obtain
\[
\sum_{i=1}^n a_i (1 - S_i^k) \le \sum_{i=1}^n \int_{S_{i-1}}^{S_i} (1 - x^k)\, dx = \int_0^1 (1 - x^k)\, dx.
\]
The integral is elementary:
\[
\int_0^1 (1 - x^k) \, dx = \int_0^1 1 \, dx - \int_0^1 x^k \, dx
= 1 - \frac{1}{k+1} = \frac{k}{k+1}.
\]
Combining the previous steps, we get
\[
\sum_{i=1}^n a_i(1-S_i^k) < \frac{k}{k+1}.
\]

\begin{remark} 
The inequality is strict if all \(a_i>0\) and \(n>1\), since the right-endpoint Riemann sum strictly underestimates the integral for a strictly decreasing function, unless the partition coincides with specific points of the polynomial.
\end{remark}

\begin{example}
Let $n=3$ and $a_1=0.2$, $a_2=0.3$, $a_3=0.5$. Then $S_1=0.2$, $S_2=0.5$, $S_3=1$. For $g(x)=1-x^2$, we have
\[
\sum_{i=1}^3 a_i (1-S_i^2) = 0.2 (1-0.04) + 0.3 (1-0.25) + 0.5 (1-1) = 0.192 + 0.225 + 0 = 0.417 < \frac{2}{3}.
\]
\end{example}

Other examples (for different types of decreasing functions) are given in Appendix A.

\section{Abel summation}\label{sec:abel}

Another useful representation follows from discrete integration by
parts. Writing
\[
T_n = \sum_{i=1}^n (S_i-S_{i-1})g(S_i),
\]
Abel summation gives
\[
T_n = g(1) + \sum_{i=1}^{n-1} S_i\bigl(g(S_i)-g(S_{i+1})\bigr).
\]
In the polynomial case $g(x)=1-x^k$ this becomes
\[
T_n = \sum_{i=1}^{n-1} S_i (S_{i+1}^k-S_i^k).
\]
This discrete integration by parts, or Abel summation, provides an alternative representation of the sum 
\[
T_n = \sum_{i=1}^n a_i g(S_i)
\]
in terms of the cumulative sums \(S_i\) and the successive differences of \(g(S_i)\). It highlights how each term in the sum depends on both the cumulative weight \(S_i\) and the change in \(g\) between successive points. This form makes the role of the monotonicity of \(g\) explicit: if \(g\) is decreasing, then each term \(S_i (g(S_i)-g(S_{i+1}))\) is non-negative, providing an intuitive explanation for why the discrete sum is bounded above by the corresponding integral. Moreover, this representation is useful for analyzing strictness conditions and for potential generalizations to convex or piecewise functions.

\section{Continuous formulation and a general identity}\label{sec:conti}

Let $f$ be a probability density on $[0,1]$ and define
\[
F(x)=\int_0^x f(t)\,dt.
\]
Consider
\[
I = \int_0^1 f(x)[1-F(x)^k]dx.
\]
Since $F'(x)=f(x)$ we substitute
\[
u=F(x).
\]
Then
\[
du=f(x)dx
\]
and
\[
I=\int_0^1 (1-u^k)du=\frac{k}{k+1}.
\]
Thus
\[
\int_0^1 f(x)[1-F(x)^k]dx=\frac{k}{k+1}.
\]
The continuous formulation highlights the connection between the discrete inequality and a distribution-free integral identity. By interpreting \(f(x)\,dx\) as a probability measure and using the substitution \(u = F(x)\), the sum \(\sum a_i g(S_i)\) is seen as a discrete approximation of the integral \(\int_0^1 g(u)\,du\). This perspective shows that the upper bound is independent of the particular weights \(a_i\) and arises solely from the monotonicity of \(g\). It also provides a probabilistic interpretation: the integral corresponds to the expected value of \(g(U)\), where \(U\) is uniformly distributed on \([0,1]\) \cite{feller}, and the discrete sum approximates this expectation through cumulative sums.

\begin{proposition}

Let $f$ be a probability density on $[0,1]$ with cumulative distribution
function $F$. For any integrable function $g$,
\[
\int_0^1 f(x) g(F(x))dx
=
\int_0^1 g(u)du.
\]

\end{proposition}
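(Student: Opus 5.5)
The plan is to prove the identity as a change of variables $u=F(x)$, the same substitution used above for $g(u)=1-u^k$, but carried out so that it needs no smoothness or invertibility of $F$. The cleanest route is the pushforward (probability integral transform) formulation. Let $X$ be a random variable with density $f$ on $[0,1]$. Then the left-hand side is $\mathbb{E}[g(F(X))]$, and the right-hand side is $\mathbb{E}[g(U)]$ with $U$ uniform on $[0,1]$. So it suffices to show that $F(X)$ is uniformly distributed on $[0,1]$. Once the laws agree, the expectations of any integrable $g$ agree by the abstract change-of-variables formula for image measures.

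The key step is proving $\mathbb{P}(F(X)\le u)=u$ for every $u\in[0,1]$.
\begin{enumerate}
\item Since $f\ge 0$ is integrable, $F$ is continuous and nondecreasing, with $F(0)=0$ and $F(1)=1$.
\item For $u\in(0,1)$, set $x_u=\max\{x: F(x)\le u\}$. This maximum exists by continuity, and $F(x_u)=u$ by the intermediate value theorem.
\item Monotonicity gives $\{F(X)\le u\}=\{X\le x_u\}$.
\item Hence $\mathbb{P}(F(X)\le u)=F(x_u)=u$.
\end{enumerate}
The endpoints $u=0$ and $u=1$ are trivial.

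Flat pieces of $F$, where $f=0$, are exactly where the naive inverse $F^{-1}$ fails to exist. This is why I will use $x_u$ rather than $F^{-1}(u)$. The set $\{F(X)=u\}$ may be an interval, but it carries no mass, so the argument is unaffected.

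Alternatively, I would first prove the identity for indicators $g=\mathbf 1_{[0,u]}$, which is exactly the computation above. I would then extend by linearity to step functions, and by monotone class or monotone convergence to nonnegative measurable $g$ and then to integrable $g$. The extension also shows that $g\circ F$ is $f\,dx$-integrable whenever $g$ is Lebesgue integrable, which is part of the claim.

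I expect the main obstacle to be the justification of the substitution, not the computation itself. The formal step $du=f(x)\,dx$ used earlier assumes $F$ is a $C^1$ bijection. In general $F$ is only absolutely continuous and may be constant on intervals, and $g$ is merely integrable, so $g\circ F$ is defined only up to the behaviour of $F$. The uniformity of $F(X)$ handles this: null sets for Lebesgue measure pull back to null sets for $f\,dx$ under $F$, precisely because the pushforward of $f\,dx$ under $F$ is Lebesgue measure. As a sanity check, taking $g(u)=1-u^k$ recovers $k/(k+1)$, matching the computation above.
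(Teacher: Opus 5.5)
Your proof is correct. It is also considerably more careful than the paper's, whose entire proof is the sentence that the identity ``follows from the substitution $u=F(x)$'', i.e.\ the formal step $du=f(x)\,dx$ already used for $g(u)=1-u^k$.

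Both arguments rest on the change of variables $u=F(x)$, but you carry it out at the level of measures. You prove the probability integral transform via $\{F\le u\}=[0,x_u]$, taking $x_u$ to be the \emph{largest} point with $F\le u$, which correctly sidesteps flat pieces of $F$. You then invoke the image-measure formula, or equivalently extend from indicators $\mathbf 1_{[0,u]}$ by linearity and a monotone class argument.

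What your route buys is a self-contained proof in the generality the statement actually claims. It covers $f$ merely integrable, so that $F$ is only absolutely continuous and possibly constant on intervals, and $g$ merely Lebesgue integrable. It also establishes that $f\cdot(g\circ F)$ is measurable and integrable, and it makes explicit the probabilistic content ($F(X)$ uniform) that the paper only mentions in its discussion. The paper's route buys brevity, and it is rigorous as written when $f$ and $g$ are continuous.

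One small correction to your commentary: the classical substitution rule $\int_a^b g(\varphi(x))\varphi'(x)\,dx=\int_{\varphi(a)}^{\varphi(b)}g(u)\,du$ does not require $\varphi$ to be a bijection. For $\varphi\in C^1$ and $g$ continuous, it follows from the fundamental theorem of calculus applied to $G\circ\varphi$ with $G'=g$. Moreover, for monotone absolutely continuous $\varphi$ it holds for every Lebesgue integrable $g$; this is a standard real-analysis theorem, e.g.\ Rudin, \emph{Real and Complex Analysis}, Thm.~7.26. So the paper's one-liner can be made fully rigorous by citing that result, and your argument in effect reproves it in this monotone setting.
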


This identity follows from the substitution $u=F(x)$.

\section{Connection with majorization}\label{sec:maj}

The cumulative vector $(S_1,\dots,S_n)$ possesses a natural ordering
\[
0<S_1<S_2<\dots<S_n=1.
\]
Such structures are closely related to the theory of majorization, which plays an important role in the theory of inequalities. Let $x=(x_1,\dots,x_n)$ and $y=(y_1,\dots,y_n)$ be vectors in $\mathbb{R}^n$, and let $x_{(1)} \ge x_{(2)} \ge \dots \ge x_{(n)}$ and $y_{(1)} \ge y_{(2)} \ge \dots \ge y_{(n)}$ denote their components arranged in decreasing order. One says that $x$ is majorized by $y$ (denoted $x \prec y$) if
\[
\sum_{i=1}^k x_{(i)} \le \sum_{i=1}^k y_{(i)}, \qquad k=1,\dots,n-1,
\]
and
\[
\sum_{i=1}^n x_i = \sum_{i=1}^n y_i .
\]
Intuitively, this means that the components of $x$ are more evenly distributed than those of $y$. A fundamental result in this area is Karamata's inequality: if $x \prec y$ and $g$ is a convex function, then (see Ref. \cite{hardy}):
\[
\sum_{i=1}^n g(x_i) \le \sum_{i=1}^n g(y_i).
\]
Although the framework considered in the present work
is different, the underlying philosophy is similar: structural constraints
on cumulative quantities lead to bounds that are independent of the
particular coefficients $a_i$ involved.

\section{Discussion on the strictness of the inequality}\label{sec:strict}

The discrete monotone inequality 
\[
\sum_{i=1}^n a_i g(S_i) \le \int_0^1 g(x)\,dx
\]
is strict if all weights $a_i>0$ and $n>1$, provided that $g$ is strictly decreasing on $[0,1]$. This is because a right-endpoint Riemann sum underestimates the integral for a strictly decreasing function, unless the partition coincides with points where the function has constant slope or a linear segment matching the rectangles exactly. Equality occurs in special, highly symmetric situations. For example:

\begin{itemize}
    \item If all $a_i = 1/n$ (uniform weights) and $S_i = i/n$, and $g$ is linear, then the Riemann sum coincides with the integral exactly:
    \[
    \sum_{i=1}^n \frac{1}{n} g\Big(\frac{i}{n}\Big) = \int_0^1 g(x)\,dx, \quad g(x)=mx+b.
    \]
    \item More generally, equality can hold if the partition points $(S_i)$ coincide with the points at which the integral of $g$ over each subinterval exactly equals the rectangle area used in the sum. This typically requires $g$ to have piecewise linear segments matching the cumulative sums.
\end{itemize}

Thus, for strictly decreasing nonlinear functions like $g(x)=1-x^k$, $g(x)=e^{-\lambda x}$, $g(x)=\ln(2-x)$, $g(x)=1/(1+x)$ or $g(x)=\cos(\pi x/2)$, equality never occurs for $n>1$ with all $a_i>0$ (see Appendix). This reinforces that the inequality is truly strict in most practical situations.

\section{Conclusion}

We have shown that expressions of the form
\begin{equation}\label{res}
T_n=\sum_{i=1}^n a_i g(S_i)
\end{equation}
where $a_i>0$, $\sum_{i=1}^n a_i=1$ and $g:[0,1]\to\mathbb{R}$ is a decreasing integrable function, admit a natural interpretation as Riemann approximations of a simple continuous identity arising from the probability integral transform. The discrete monotone inequality and its continuous counterpart admit several interesting applications. Let $X$ be a random variable uniformly distributed on $[0,1]$, and let $g$ be a decreasing function. Then the main theorem implies
\[
\sum_{i=1}^n a_i g(S_i) \le \int_0^1 g(x)\, dx = \mathbb{E}[g(X)],
\]
where $\mathbb{E}[g(X)]$ denotes the expected value of $g(X)$. This provides a natural upper bound for expectations of decreasing functions of cumulative sums, which can be useful in probability and statistics when approximating expectations from discrete data. The discrete sum in Eq. (\ref{res}) can be interpreted as a right-endpoint Riemann sum approximation of the integral $\int_0^1 g(x)\, dx$, he monotonicity of $g$ ensuring that this sum provides a guaranteed upper bound for the integral. Hence, the inequality can be used in numerical analysis to obtain safe estimates for integrals of decreasing functions when only non-uniform partitions or weighted sums are available. The cumulative vector $(S_1,\dots,S_n)$ naturally orders the partial sums of $(a_i)$. This viewpoint clarifies the origin of constants such as $k/(k+1)$ and suggests further connections with probabilistic structures. Although our setting deals with decreasing functions, monotone properties of $g$ combined with cumulative sums produce bounds independent of the particular coefficients $a_i$. This connects the inequality to classical results and provides a broader context for understanding discrete vs. continuous inequalities. It may also be helpful for the study of approximately convex functions \cite{Hyers1952,Green1952}.

\appendix

\section{Appendix: Inequalities for other functions}

In this appendix, we discuss four additional cases: exponential, logarithmic, reciprocal and trigonometric.

\begin{example}[Exponential case]
Consider the decreasing exponential function
\[
g(x) = e^{-\lambda x}, \quad \lambda > 0.
\]
By the main theorem, we have
\[
\sum_{i=1}^n a_i e^{-\lambda S_i} \le \int_0^1 e^{-\lambda x} \, dx = \frac{1-e^{-\lambda}}{\lambda}.
\]
This illustrates that the discrete inequality holds for non-polynomial functions as well.
\end{example}

\begin{example}[Logarithmic decreasing case]
Consider the decreasing logarithmic function
\[
g(x) = \ln(2-x), \quad x\in[0,1].
\]
Then the integral is
\[
\int_0^1 \ln(2-x)\, dx = \left[ (2-x)\ln(2-x) - (2-x) \right]_0^1 = 2\ln 2 - 1.
\]
Hence, the main theorem gives
\[
\sum_{i=1}^n a_i \ln(2-S_i) \le 2\ln 2 - 1.
\]
\end{example}

\begin{example}[Reciprocal decreasing case]
Consider the decreasing reciprocal function
\[
g(x) = \frac{1}{1+x}, \quad x \in [0,1].
\]
The integral is
\[
\int_0^1 \frac{1}{1+x} \, dx = \ln 2.
\]
Hence, the main theorem gives the discrete inequality
\[
\sum_{i=1}^n \frac{a_i}{1+S_i} \le \ln 2.
\]
This shows that the theorem applies to functions with a simple pole at $x=-1$, still decreasing on $[0,1]$.
\end{example}

\begin{example}[Trigonometric decreasing case]
Consider the decreasing trigonometric function
\[
g(x) = \cos\left(\frac{\pi x}{2}\right), \quad x \in [0,1].
\]
The integral is
\[
\int_0^1 \cos\left(\frac{\pi x}{2}\right) \, dx = \frac{2}{\pi}.
\]
Hence, the main theorem gives
\[
\sum_{i=1}^n a_i \cos\left(\frac{\pi S_i}{2}\right) \le \frac{2}{\pi}.
\]
This illustrates the applicability of the inequality to non-algebraic functions.
\end{example}

These examples show that the theorem applies to any decreasing monotone function, not only polynomials or exponentials.

\end{document}